\documentclass[11pt,a4paper]{article}

\usepackage{amsmath,amsfonts,amssymb,amsthm}
\allowdisplaybreaks
\usepackage{graphicx}
\usepackage{enumitem}

\usepackage[margin=2.5cm]{geometry} 
\usepackage[T1]{fontenc}
\usepackage{authblk}

\newcommand{\keywords}[1]{\par\addvspace\baselineskip\noindent\textbf{Keywords:}\enspace#1}
\newcommand{\msc}[1]{\par\addvspace{0.5\baselineskip}\noindent\textbf{MSC Classification:}\enspace#1}

\usepackage{hyperref}

\usepackage{xcolor}

\def\intO{\int_\Omega}
\def\tmax{T_{\text{max}}}
\def\R{\mathbb{R}}
\def\N{\mathbb{N}}

\usepackage{constants}

\newconstantfamily{const}{
	symbol=c, 
}
\newconstantfamily{eps}{
	symbol=\varepsilon, 
}
\newconstantfamily{tau}{
	symbol=\tau, reset={section}
}

\newtheorem{theorem}{Theorem}[section]
\newtheorem{lemma}[theorem]{Lemma}
\newtheorem{remark}[theorem]{Remark}

\begin{document}
	
\title
{Nonlocal logistics and nonlinear productions in an attraction-repulsion chemotaxis model: \\ analysis of the global well-posedness}

\author[1]{{Rafael} {D\'iaz Fuentes}\thanks{rafael.diazfuentes@unica.it}}

\author[2]{{Mar\'ia Victoria} {Redondo Neble}\thanks{victoria.redondo@gm.uca.es}}

\author[1]{{Giuseppe} {Viglialoro}\thanks{giuseppe.viglialoro@unica.it}}

\affil[1]{{Dipartimento di Matematica e Informatica}, {Universit\`a degli Studi di Cagliari}, \protect\\ {{Via Ospedale 72}, {Cagliari}, {09124}, {(CA)}, {Italy}}}

\affil[2]{{Departamento de Matemáticas}, {Universidad de Cádiz, C.A.S.E.M.}, \protect\\ {{Avda. Rep\'ublica \'Arabe Saharawi}, {Puerto Real}, {11510}, {Cádiz}, {Spain}}}

\date{\today}
	\maketitle
	\begin{abstract}
    This paper investigates a {{three-component}} chemotaxis system involving both attraction and repulsion effects, as well as a nonlocal logistic-type source term. Mathematically, if $u=u(x,t)$, $v = v(x,t)$ and $w = w(x,t)$ denote the cell distribution, and   
the attractive and the repulsive chemical signals, the model is then described by
	\begin{equation*}
		\begin{cases} 
			u_t = \Delta u - \chi \nabla \cdot (u \nabla v) + \xi \nabla \cdot (u \nabla w) + a u^\alpha - b u^\alpha \int_\Omega u^\beta, & x \in \Omega, \ t > 0, \\ 
			\tau v_t = \Delta v - v + f(u), & x \in \Omega, \ t > 0, \\ 
			\tau w_t = \Delta w - w + g(u), & x \in \Omega, \ t > 0. 
		\end{cases}
	\end{equation*}
    Here, $\Omega \subset \mathbb{R}^n$ ($n \geq 1$) is a bounded smooth domain, $\tau\in\{0,1\}$, $a,b,\alpha,\beta,\chi,\xi>0$, the production functions $f(u)$ and $g(u)$ are assumed to satisfy algebraic growth conditions of order $\ell$ and $\rho$, generalizing prototypes of the form $u^\ell$ and $u^\rho$, $\ell,\rho>0$. The work is devoted to proving the global existence and boundedness of classical solutions under a suitable balance between the signal production exponents $\ell, \rho$ and the nonlocal damping exponents $\alpha, \beta$, for regular enough initial data and zero-flux boundary restrictions. In this regard, two main theorems are established for the cases where the chemical signals satisfy either elliptic ($\tau=0$) or parabolic ($\tau=1$) partial differential equations, highlighting how sufficiently strong nonlocal damping prevents the formation of singularities in time.
    We extend the results obtained in \cite{CDFV24}, {{where the fully parabolic ($\tau=1$) and only attraction version is studied. In our context, we establish }} well-posedness of the system and the long-time behavior of solutions. 
	\end{abstract}
	
	\keywords{Chemotaxis, boundedness, nonlocal source, attraction-repulsion}
	
	\msc{35A01, 35R09, 35B45, 35K55, 35Q92; Secondary: 92C17.}

 	\tableofcontents
	
	\section{Introduction and motivations}

    \subsection{Some comments on chemotaxis phenomena}
	Chemotaxis, the directed movement of cells or organisms in response to chemical gradients, is a ubiquitous phenomenon in biology, underlying processes from bacterial patterning and immune response to embryogenesis and tumor invasion. The complex interplay between cell migration, proliferation, and chemical signaling dynamics gives rise to intricate spatio-temporal structures and behaviors. Mathematical models, particularly systems of partial differential equations, are indispensable tools for elucidating the mechanisms governing these phenomena and for predicting their emergent properties.
	
	The foundational Keller-Segel (KS) models \cite{Keller1970,Keller1971} provided an early framework for describing chemotaxis driven by attractive signals. However, the classical KS systems are well-known to exhibit solutions that form finite-time singularities, which often limits its applicability in biological contexts where cell populations are observed to remain bounded. 

By the mathematical point-of-view a general formulation to describe chemotaxis phenomena is given by 
\begin{equation}\label{eq:KS_general}
\begin{cases}
    u_t = \nabla \cdot \big( D(u)\nabla u - \chi(u,v)\nabla v \big) + f(u,v), 
    & \text{in}\; \Omega\times(0,\tmax), \\
    v_t = \nabla \cdot \big( D_v(v)\nabla v \big) + g(u,v), 
    & \text{in}\; \Omega\times(0,\tmax), \\
    u(x,0)=u_0(x), \quad v(x,0)=v_0(x), 
    & x \in \Omega,
\end{cases}
\end{equation}
where $\tmax$ denotes the maximum time up to which solutions to the system are defined.

In the above model, the spatial domain is denoted by $\Omega \subset \mathbb{R}^n$. 
The function $u = u(x,t)$ represents the cell density (for instance, bacteria or biological cells), whereas $v = v(x,t)$ denotes the concentration of the chemoattractant, which transports the cells in the direction of its gradient.  
The term $D(u)$ corresponds to the diffusion coefficient of the cells, which may depend nonlinearly on the density $u$. 
The function $\chi(u,v)$ describes the chemotactic sensitivity. 
Moreover, $D_v(v)$ denotes the diffusion coefficient of the chemoattractant. 
The term $f(u,v)$ represents a source term in the equation for $u$, possibly incorporating effects such as growth, death, competition, or external forcing. 
In turn, $g(u,v)$ accounts for the production and/or degradation mechanisms of the chemoattractant. 
Finally, $u_0(x)$ and $v_0(x)$ denote the prescribed initial data.

The qualitative behavior of system \eqref{eq:KS_general} is highly sensitive to the choice of initial data and to the specific structure of the functions $D$, $D_v$, $f$, $g$, and $\chi$; In particular, depending on the specific choices of parameters and functional forms, solutions may exhibit finite-time blow-up (gathering mechanisms for the cell density), global-in-time boundedness and stability, or the development of intricate nontrivial patterns. For a detailed analysis of models closely related to \eqref{eq:KS_general}, we refer, for instance, to \cite{Horstmann2005, Bellomo2015, Nagai01, Nagai1995blow, HerreroVelazquez, Tello2007, Winkler2010,FuestLankeitMaizu2025}.

\subsection{Some motivations: the introduction of nonlocal sources}

In this article, we focus on regimes in which aggregation phenomena are suppressed. 
One possible mechanism preventing blow-up consists in incorporating an additional chemical signal $w=w(x,t)$ acting as a chemorepellent, whose effect counterbalances the attraction induced by $v$. 
The resulting model naturally leads to a three-component system of partial differential equations that extends \eqref{eq:KS_general}. 

From a biological perspective, such an extension is well motivated, as cells in many systems respond not to a single chemical stimulus but to complex chemical environments involving both attractive and repulsive cues. 
For example, immune cells are attracted toward pathogens while simultaneously being repelled by inhibitory signals, and metastatic tumor cells may follow growth-promoting factors while avoiding cytotoxic regions. 
Mathematical models that incorporate both attraction and repulsion provide a more comprehensive framework for cellular migration and are known to exhibit a wider spectrum of dynamical behaviors, including stable pattern formation, travelling waves, and oscillatory dynamics that do not arise in purely attractive settings \cite{Luca2003,Tao2013,Li2016,Viglia21,FrassuCorViglialoro,YutaroTomomi22,YutaroTomomi26}.

Over the past decades, classical local logistic sources have been extensively studied in the context of biological transport and chemotaxis models, with results reported in investigations already mentioned above. In contrast, nonlocal logistic sources are attracting growing interest only recently, as they capture interactions based on spatially averaged quantities rather than pointwise values. These nonlocal terms arise in population dynamics and, through their nonlinearities, can either stabilize the system or produce complex spatio-temporal patterns.

From an analytical perspective, nonlocal and nonlinear source terms often play a key role in preventing blow-up and ensuring bounded solutions in chemotaxis systems. While recent studies (e.g., \cite{Bian18,Chai,CDFV24,Viglia21,ZHAO25}) show that such mechanisms can regularize dynamics and still allow nontrivial spatial patterns, here we focus specifically on attraction–repulsion systems with nonlocal and nonlinear source terms.

\subsection{Mathematical description of the model}
    
Building upon these foundational advancements, this paper is dedicated to the  mathematical analysis of a natural  three-component chemotaxis model featuring both attraction and repulsion, coupled with a nonlocal logistic-type source term. The system under consideration describes the spatio-temporal evolution of cell density $u(x,t)$, an attractive chemical signal $v(x,t)$, and a repulsive chemical signal $w(x,t)$ in an arbitrary bounded domain $\Omega \subset \R^n$, with $n\ge 1$ and with smooth boundary $\partial \Omega$. The model is given by
	\begin{equation}
		\begin{cases}
			u_t = \Delta u - \chi \nabla \cdot (u\nabla v) + \xi \nabla \cdot (u\nabla w) + a u^\alpha - b u^\alpha \intO u^\beta, & \text{in}\ \Omega\times(0,\tmax), \\
			\tau v_t = \Delta v - v + f(u), & \text{in}\  \Omega\times(0,\tmax), \\
			\tau w_t = \Delta w - w + g(u), & \text{in}\ \Omega\times(0,\tmax), \\
			\frac{\partial u}{\partial \nu} = \frac{\partial v}{\partial \nu} = \frac{\partial w}{\partial \nu} = 0, & \text{on}\ \partial \Omega \times(0,\tmax), \\
			u(x,0) = u_0(x), {{\tau}}v(x,0) = {{\tau}}v_0(x), {{\tau}}w(x,0) = {{\tau}}w_0(x), & \text{in}\ \bar{\Omega},
		\end{cases}		\label{eq:model}
	\end{equation}
with $\tau\in\{0,1\}$, {$\chi,\xi, a,b,\alpha,\beta > 0$}, and functions $f, g$, sufficiently regular,  obeying some growth conditions, behaving (for some $\ell,\rho>0$) as $u^\ell$ and $u^\rho$ respectively. The vector $\nu$ indicates the outward normal on $\partial \Omega$. The initial data $u_0(x)$, $v_0(x)$, and $ w_0(x)$ are assumed to be non-negative and sufficiently regular to ensure the existence of classical solutions.

Given the discussion so far, the model admits a natural interpretation. It fully accounts for the dynamic coupling of cells with both an attractant and a repellent, making it particularly relevant in scenarios where chemical gradients evolve on timescales comparable to cell movement, with the repellent actively shaping the environment. By incorporating nonlinear population kinetics ($\alpha$, $\beta$) and chemical production exponents ($\ell$, $\rho$), the model captures the interplay between aggregation, repulsion, and nonlocal damping, thereby providing a rigorous framework to study pattern formation, population persistence, and complex migratory behaviors.

The model herein proposed extends the existing literature in some aspects. While the structure of the $u$ and $v$ equations, including the nonlocal logistic term, shares similarities with \cite{CDFV24} ($\xi = 0$, $\ell=1$ in \eqref{eq:model}), and the attraction-repulsion aspect has precedents in models like \cite{Bian18}, our system uniquely couples these features within a three-component framework.

\subsection{Objectives and presentations of the main results}
The objective of this paper is to establish the fundamental well-posedness properties of solutions to system \eqref{eq:model}. We aim to prove the global existence and boundedness of classical solutions under some parameter constraints and initial conditions.  Our analysis, essentially, aims at  understanding how the relative strengths of attraction and repulsion (controlled by $\chi$ and $\xi$), combined with the parameters of the nonlocal logistic term, dictate the qualitative features of cell populations in environments with multiple dynamic chemical cues. 

{In order to present the results obtained in this investigation, let us fix our setting:
\begin{equation} \label{eq:modelCond}
	\begin{split}
		&a)\quad \Omega\subset\R^n, n \ge 1,\ \text{is a bounded domain of class } C^{2+\sigma},   \text{ for some} \ \sigma\in(0,1) \\
		&b)\quad \chi,\xi,a,b>0, \alpha,\beta\ge 1 \\
		&c)\quad  f,g\in C^1([0,\infty))\ 
        \\
		&d)\quad 0\le u_0, \tau v_0, \tau w_0 \in C^{2+\sigma}(\bar{\Omega}), \ 
        \text{and} \ 
		\partial_{\nu} u_0 = \partial_{\nu} v_0 = \partial_{\nu} w_0 = 0 \ \text{on} \ \partial \Omega.
	\end{split}
\end{equation}
Additionally, $f$ and $g$ also are contextually required to satisfy 
\begin{equation} \label{eq:fg}
	0 \leq f(s) \leq K_1 s^{\ell}, \qquad K_2 s^\rho \leq  g(s) \leq K_2 s(s+1)^{\rho-1} \qquad \forall s \ge 0,
\end{equation} 
where $K_1, K_2, \ell>0$ and $\rho > 1$.
}

The main results of this work are the following. 

\begin{theorem}[Parabolic-Elliptic model] \label{theo:PE}
	{{For $\tau=0$, let}}  {us require the conditions in \eqref{eq:modelCond} and \eqref{eq:fg}}. Then, whenever either
\begin{equation}  \label{eq:TheoPEcond}
	\begin{split}
		& a) \quad \beta > \frac12 n(\alpha-1) \quad \text{and} \quad \ell \le \min\{\alpha-1, \rho\}, \;{ or } \\ 
		& b) \quad \beta > \frac12 (n\ell + 2(\ell - \alpha+1)) \quad \text{and} \quad \alpha-1 < \min\{\ell, \rho\},
	\end{split}
\end{equation}
{are complied,}  it holds that problem \eqref{eq:model} admits a unique nonnegative solution
\begin{equation*}
	\begin{split}
		&
		u \in C^{2+\sigma,1+\frac\sigma2}(\bar{\Omega}\times[0,\infty)) \cap L^\infty({\Omega}\times(0,\infty)) \\		
		& 
		v,w \in C^{2+\sigma,\frac\sigma2}(\bar{\Omega}\times[0,\infty)) \cap L^\infty({\Omega}\times(0,\infty)).
	\end{split}
\end{equation*}
\end{theorem}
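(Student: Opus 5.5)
Our plan is the standard route for such results. Local existence comes first, then an $L^p$ bound for $u$ with $p$ large, and finally a bootstrap.

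\emph{Local theory and extensibility.} A fixed-point argument in $C^0(\bar\Omega\times[0,T])$ gives a unique nonnegative classical local solution on $[0,\tmax)$. For $\tau=0$, $v$ and $w$ solve elliptic Neumann problems, and parabolic/elliptic Schauder theory then gives the stated Hölder regularity. Moreover, if $\tmax<\infty$ then $\|u(\cdot,t)\|_{L^\infty(\Omega)}\to\infty$. By the Moser--Alikakos iteration (or $L^p$--$L^\infty$ semigroup estimates for the $u$-equation), it also suffices to bound $\|u(\cdot,t)\|_{L^p(\Omega)}$ uniformly in time for some $p>n/2\max\{1,\ell,\rho\}$, since elliptic regularity then makes $\nabla v,\nabla w$ bounded.

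\emph{The $L^p$ energy estimate.} For $p>1$ we compute
\[
\frac1p\frac{d}{dt}\intO u^p = -(p-1)\intO u^{p-2}|\nabla u|^2 + \frac{p-1}{p}\Big(\chi\intO u^p(-\Delta v)... \Big)
\]
that is,
\[
\frac1p\frac{d}{dt}\intO u^p+\frac{4(p-1)}{p^2}\intO|\nabla u^{p/2}|^2=\frac{p-1}{p}\intO u^p\big(\chi(f(u)-v)-\xi(g(u)-w)\big)+a\intO u^{p+\alpha-1}-b\intO u^{p+\alpha-1}\intO u^\beta .
\]
We drop $-\chi v\le0$ and $\xi w\ge0$ only partly. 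The repulsion produces $-\xi\frac{p-1}{p}K_2\intO u^{p+\rho}$, while the attraction gives at most $\chi K_1\intO u^{p+\ell}$. The term $\xi\intO u^p w$ needs care. We would use the elliptic equation to control $\|w\|_{L^{q}}$ through $\|g(u)\|_{L^q}\le C\|u\|_{L^{q\rho}}^\rho+C$, and then Young's inequality. Alternatively, and more simply, we test so that $w$ enters only through $-\Delta w = g(u)-w$, bounding $\intO u^pw\le \epsilon\intO u^{p+\rho}+C\intO w^{(p+\rho)/\rho}$. Elliptic $L^q$ theory gives $\intO w^{(p+\rho)/\rho}\le C\intO g(u)^{(p+\rho)/\rho}\le C\intO (u+1)^{p+\rho}$, but this constant is not small. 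We therefore expect to absorb it partly via the gradient term using Gagliardo--Nirenberg.

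Case a) ($\ell\le\min\{\alpha-1,\rho\}$): we control $\chi\intO u^{p+\ell}$ by $\epsilon\intO u^{p+\alpha-1}+C$. Case b) ($\alpha-1<\min\{\ell,\rho\}$): the attraction dominates the logistic growth, so the key tool becomes the nonlocal damping.

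\emph{Main step: exploiting the nonlocal term.} Here we use Hölder/Gagliardo--Nirenberg to show that
\[
\intO u^{p+\kappa}\le \epsilon\intO|\nabla u^{p/2}|^2+ C\Big(\intO u^{p+\alpha-1}\intO u^\beta\Big)^{\theta}+C,
\]
with $\kappa=\alpha-1$ in case a) and $\kappa=\ell$ in case b). The interpolation exponent $\theta\le1$ is exactly what the conditions $\beta>\frac n2(\alpha-1)$ and $\beta>\frac12(n\ell+2(\ell-\alpha+1))$ should guarantee. Concretely, we interpolate $u^{p/2}$ between $L^{2\beta/p}$ (mass-like information from $\intO u^\beta$) and $H^1$, and combine this with Hölder between $\intO u^{p+\alpha-1}$ and $\intO u^\beta$. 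Since mass is not conserved, we also need to show that $\intO u$ stays bounded. Integrating the $u$-equation gives $\frac{d}{dt}\intO u = \intO u^\alpha(a-b\intO u^\beta)$, which is negative when $\intO u^\beta>a/b$; this yields a bound on the $L^1$ norm via Hölder, $\intO u^\beta\ge|\Omega|^{1-\beta}(\intO u)^\beta$. We then add $\intO u^p$ to both sides and apply an ODI comparison to get $\sup_t\intO u^p<\infty$. We expect this interpolation/exponent bookkeeping, and ensuring that the repulsive contribution with $w$ is genuinely absorbed, to be the main obstacle. The final bootstrap to $L^\infty$ and uniqueness are then standard.
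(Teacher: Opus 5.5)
\paragraph{Gap: the term $\xi\frac{p-1}{p}\intO u^p w$.} Your overall architecture matches the paper's: the $L^1$ bound from $\frac{d}{dt}\intO u=\intO u^\alpha(a-b\intO u^\beta)$, the $L^p$ identity, interpolation against the nonlocal damping under the two $\beta$-conditions, an ODI, and the Moser/semigroup bootstrap. The genuine gap is this term, which you leave unresolved, and the fix you suggest fails. After Young, $\intO u^pw\le\epsilon\intO u^{p+\rho}+C_\epsilon\intO w^{(p+\rho)/\rho}$. A plain elliptic bound $\intO w^{(p+\rho)/\rho}\le C\intO(u+1)^{p+\rho}$ then leaves a large multiple of $\intO u^{p+\rho}$, and there are only two ways to absorb it:
\begin{itemize}
\item \emph{Into the gradient.} With only $\intO u\le M_0$, Gagliardo--Nirenberg between $L^{2/p}$ and $H^1$ gives $\intO u^{p+\rho}\le\epsilon\intO|\nabla u^{p/2}|^2+C$ only if $\frac{p+\rho-1}{p-1+2/n}<1$, i.e.\ $\rho<\frac2n$. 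This is impossible for $n\ge2$ since $\rho>1$.
\item \emph{Into the nonlocal damping.} This would require $\beta>\frac12\big(n\rho+2(\rho-\alpha+1)\big)$, which is not assumed.
\end{itemize}
Your crude route therefore closes only when $\rho\le\alpha-1$ in a) or $\rho\le\ell$ in b). It breaks down, for example, when $\ell\le\alpha-1<\rho$ in a), or when $\rho>\ell$ in b). In these regimes the repulsive gain $-\xi\frac{p-1}{p}K_2\intO u^{p+\rho}$ (which uses the lower bound $g(s)\ge K_2s^\rho$) must genuinely cancel the $w$-contribution. For that, the top-order part of $\intO w^{(p+\rho)/\rho}$ must carry an arbitrarily small constant.

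\paragraph{Missing ingredient.} What you need is the Ehrling-type elliptic estimate the paper imports in Lemma \ref{lem:uKw}. For $0=\Delta w-w+g(u)$ with Neumann data,
\[
\intO w^{q}\le\varepsilon_0\intO g(u)^{q}+c(\varepsilon_0)\Big(\intO g(u)\Big)^{q}.
\]
It follows from compactness of $W^{2,q}\hookrightarrow L^q$ together with $\intO w=\intO g(u)$. Take $q=\frac{p+\rho}{\rho}$ and use $g(s)\le K_2(s+1)^\rho$.
\begin{itemize}
\item The first term is at most $\varepsilon_0 c\intO(u^{p+\rho}+1)$. Choosing $\varepsilon_0$ small makes the total coefficient of $\intO u^{p+\rho}$ at most $K_2$, so the repulsion cancels it; the paper takes it equal to $K_2$.
\item The second term is $\lesssim\|u\|_{L^\rho}^{p+\rho}+1$, which is genuinely lower order. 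Gagliardo--Nirenberg gives the gradient power $\frac{2(p+\rho)(1-1/\rho)}{p-1+2/n}<2$ for $p$ large, so Young's inequality and the mass bound absorb it into $\intO|\nabla u^{p/2}|^2$.
\end{itemize}

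\paragraph{Smaller point.} In your interpolation step, an exponent $\theta\le1$ with a generic constant $C$ is not enough when $\theta=1$. You need the coefficient of $\intO u^{p+\alpha-1}\intO u^\beta$ to be arbitrarily small, so that it sits below $b$ (times $p$); this is what Lemmas \ref{lem:InterA} and \ref{lem:InterB} provide.

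With these two points fixed, the rest of your plan goes through as in the paper.
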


\begin{remark}
	The conditions in Theorem \ref{theo:PE} delineate two biological regimes where the system’s stabilizing mechanisms—chemorepulsion and nonlocal death—successfully prevent overcrowding (blow-up). Condition $a)$ addresses a \emph{growth-dominated} regime where attractant production $\ell$ is relatively weak and subordinated to both the proliferation rate $\alpha$ and the chemorepellent production $\rho$. In this case, global resource competition $\beta$, serves as the primary regulator. Conversely, {c}ondition $b)$ describes a \emph{signal-dominated} regime where attractant production is more aggressive. Here, the chemorepellent exponent $\rho$ must remain sufficiently high to provide a necessary spatial buffer against aggregation, while the nonlocal death rate $\beta$ must scale even more strictly to compensate for the heightened signaling intensity. In both regimes, the requirement $\min\{\ \cdot \ ,\rho\}$ highlights that chemorepulsion is an indispensable partner to nonlocal damping in maintaining the structural stability of the population.
\end{remark}

\begin{theorem}[Fully Parabolic model] \label{theo:PP}
	{For $\tau=1$, let} {us require the conditions in \eqref{eq:modelCond} and \eqref{eq:fg}.} Then, whenever either
	\begin{equation} \label{eq:TheoPPcond}
		\begin{split}
		& a) \quad \beta > \frac12 n(\alpha-1) \quad \text{and} \quad \alpha-1 \ge \max\{\rho,\ell\},\; {or }  \\
		& b) \quad \beta > \frac12 (n\ \max\{\rho,\ell\} + 2 (\max\{\rho,\ell\} - \alpha + 1)) \quad \text{and} \quad \alpha-1 < \min \{\rho,\ell\}
		\end{split}
	\end{equation}
{are complied,}	it holds that problem \eqref{eq:model} admits a unique nonnegative solution $(u,v,w)$ with
	\begin{equation*}
			u,v,w \in C^{2+\sigma,1+\frac{\sigma}{2}}(\bar{\Omega}\times[0,\infty)) \cap L^\infty(\Omega\times(0,\infty)).
	\end{equation*}
	
\end{theorem}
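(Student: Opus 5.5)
The plan is the standard one: local existence with an extensibility criterion, then a uniform $L^p$ bound for $u$ with large $p$, then bootstrapping. First, by a fixed-point argument in the spirit of \cite{CDFV24}, there is a unique nonnegative classical solution on $[0,\tmax)$ with the stated regularity. Moreover, if $\tmax<\infty$ then $\limsup_{t\to\tmax}\|u(\cdot,t)\|_{L^\infty(\Omega)}=\infty$. It will also suffice to show that $\sup_{t<\tmax}\|u(\cdot,t)\|_{L^p(\Omega)}<\infty$ for some $p>\frac n2\max\{\ell,\rho,1\}$ (together with suitably large $p$). Indeed, parabolic regularity for the $v$ and $w$ equations gives $\nabla v,\nabla w\in L^\infty(L^q)$ with $q>n$, and a Moser--Alikakos iteration (or the Neumann heat semigroup) for the $u$ equation then gives $L^\infty$ bounds. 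The term $au^\alpha$ is harmless here once $\intO u^\beta$ controls it, as in the next step.

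\textbf{Step 1: the $L^p$ energy functional.} Test the $u$ equation with $u^{p-1}$:
\begin{align*}
\frac1p\frac{d}{dt}\intO u^p+(p-1)\intO u^{p-2}|\nabla u|^2
={}&-\frac{p-1}{p}\chi\intO u^p\Delta v+\frac{p-1}{p}\xi\intO u^p\Delta w\\
&+a\intO u^{p+\alpha-1}-b\intO u^{p+\alpha-1}\intO u^\beta .
\end{align*}
With $w$ coupled through $g$, it is tempting to use $-\Delta w=g(u)-w-w_t$ and $g(u)\ge K_2u^\rho$ to produce a good term $-\frac{p-1}{p}\xi K_2\intO u^{p+\rho}$. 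In the parabolic case, however, $w_t$ does not have a sign. So I would instead treat both cross terms by maximal Sobolev regularity (for example the variation-of-constants/maximal regularity lemma in $L^{\frac{p+\kappa}{\kappa}}$). Concretely, estimate $\intO u^p|\Delta v|\le \eta\intO u^{p+\ell}+C\intO|\Delta v|^{\frac{p+\ell}{\ell}}$ by Young. Then, after multiplying by $e^{t}$ and integrating in time, use
\begin{equation*}
\int_0^t e^{s}\intO|\Delta v|^{\frac{p+\ell}{\ell}}\le C\int_0^t e^s\intO f(u)^{\frac{p+\ell}{\ell}}+C\le C\int_0^te^s\intO u^{p+\ell}+C,
\end{equation*}
and do the same for $w$ with $\rho$. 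This is why the theorem uses $\max\{\rho,\ell\}$. The repulsion is only used through its upper growth, and one must absorb an $\intO u^{p+m}$ term, where $m:=\max\{\rho,\ell\}$.

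\textbf{Step 2: absorbing $\intO u^{p+m}$ and $a\intO u^{p+\alpha-1}$ with the nonlocal damping.} In case a), $\alpha-1\ge m$. Young's inequality gives $\intO u^{p+m}\le \eta\intO u^{p+\alpha-1}+C$, so everything reduces to controlling $\intO u^{p+\alpha-1}$ by $\intO u^{p+\alpha-1}\intO u^\beta$ plus the dissipation $\intO|\nabla u^{p/2}|^2$. If $\intO u^\beta$ is large, the damping wins. Otherwise $\intO u^\beta\le M$, and Gagliardo--Nirenberg interpolation with $\|u^{p/2}\|_{L^{2\beta/p}}$ bounded gives
\begin{equation*}
\intO u^{p+\alpha-1}=\|u^{p/2}\|_{L^{2(p+\alpha-1)/p}}^{2(p+\alpha-1)/p}\le C\|\nabla u^{p/2}\|_{L^2}^{2\theta\frac{p+\alpha-1}{p}}+C.
\end{equation*}
The exponent is $<2$ exactly when $\beta>\frac n2(\alpha-1)$, uniformly for large $p$. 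This is where the condition $\beta>\frac12 n(\alpha-1)$ enters, and the same mechanism works in \cite{CDFV24}. In case b), $m>\alpha-1$, so the dominant term is $\intO u^{p+m}$. Using $\intO u^\beta\le M$ in the same Gagliardo--Nirenberg computation with exponent $p+m$ forces $\beta>\frac n2 m+(m-\alpha+1)$. The extra $m-\alpha+1$ comes from pairing: on the set where $\intO u^\beta$ is large, the good term $b\intO u^{p+\alpha-1}\intO u^\beta$ must dominate $\intO u^{p+m}$. For that I would use $\intO u^{p+m}\le(\intO u^{p+\alpha-1})^{\lambda}(\intO u^{r})^{1-\lambda}$ combined with the gradient, balancing homogeneities.

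\textbf{Main obstacle.} The delicate point is this two-regime splitting in case b). The damping is a product of two integrals, not a pointwise power, so it must be combined with Gagliardo--Nirenberg and Young to beat $\intO u^{p+m}$ with exponents that match exactly $\beta>\frac12(nm+2(m-\alpha+1))$. I would first try the ansatz
\begin{equation*}
\intO u^{p+m}\le \varepsilon\intO|\nabla u^{p/2}|^2+\varepsilon\intO u^{p+\alpha-1}\intO u^\beta+C,
\end{equation*}
obtained via Hölder between $L^{p+\alpha-1}$ and $L^\beta$ followed by GN. The remaining obstacle is then handling the time-integrated (maximal-regularity) form of the inequality, through an ODE comparison for $y(t)=e^t\intO u^p$. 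Once the $L^p$ bound holds, Step 0 gives boundedness and $\tmax=\infty$, and uniqueness follows from the local theory.
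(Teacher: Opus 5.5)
Your proposal follows the paper's proof of Theorem~\ref{theo:PP} essentially step for step. The shared steps are the weighted functional $e^t\int_\Omega u^p$, Young's inequality on the two cross terms with exponents $\frac{p+\ell}{\ell}$ and $\frac{p+\rho}{\rho}$, and the maximal Sobolev regularity of Lemma~\ref{lem:parabReg} using only the upper growth of $f$ and $g$ (whence $\max\{\rho,\ell\}$). After that comes absorption into dissipation and nonlocal damping, and your case b) ``ansatz'' is exactly Lemma~\ref{lem:InterB} with $\ell$ replaced by $\max\{\rho,\ell\}$, which the paper likewise imports from the literature rather than proving. The only local differences are these: your two-regime splitting in case a) is a slightly more elementary route to the Lemma~\ref{lem:InterA}-type bound, and no ODE comparison is needed at the end, because once the integrand is bounded pointwise in $s$ the integrated inequality directly gives $e^t\int_\Omega u^p\le Ce^t+C$.
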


\begin{remark}
	In the fully parabolic regime ($\tau=1$) the chemical signals $v$ and $w$ possess their own evolutionary dynamics, introducing a finite propagation speed and a temporal lag in the cellular response. Mathematically and biologically, this is reflected in the conditions $a)$ and $b)$ in Theorem \ref{theo:PP} by the dominance of the term $\max\{\rho,\ell\}$. Condition $a)$ characterizes a \emph{proliferation-dominated} regime where both signaling mechanisms are relatively mild; here, the nonlocal damping $\beta$ is primarily tasked with regulating local birth rates. In contrast, condition $b)$ represents a \emph{high-intensity signaling} regime where at least one chemical signal (whether the aggregative attractant $\ell$ or the dispersive repellent $\rho$) is highly potent. In this scenario, the requirement on the nonlocal death rate $\beta$ becomes significantly more stringent to counteract the heightened cross-diffusive fluxes. This highlights that in a parabolic environment, the presence of strong signaling gradients necessitates a more robust global competitive pressure to maintain population stability, regardless of whether the primary signaling drive is attractive or repulsive.
\end{remark}

{
\begin{remark}
In \cite{CDFV24}, for $\tau=1$, $\xi = 0$, and $\ell=1$ in \eqref{eq:model},  a unique classical solution, global and uniformly bounded in time, is obtained under two scenarios: the subquadratic growth rate
\begin{equation} \label{eq:condACGFV}
	1 \le \alpha < 2, \ \beta > \frac{n}{2} + 2 - \alpha
\end{equation}
and the superquadratic growth rate
\begin{equation} \label{eq:condBCGFV}
	2 \le \alpha < 1 + \frac{2\beta}{n}, \ \beta > \frac{n}{2}.
\end{equation}
A comparison with these results is applicable specifically to Theorem \ref{theo:PP}. We observe that while condition \eqref{eq:TheoPPcond} $a)$  recovers the subquadratic growth regime \eqref{eq:condACGFV} when setting $\ell = 1$ and neglecting chemorepulsion ($\rho$), the regimes described in condition \eqref{eq:TheoPPcond} $b)$ represent a significant departure from the superquadratic case \eqref{eq:condBCGFV}. Notably, in our framework, the proliferation exponent $\alpha$ is no longer required to be upper-bounded by a value dependent on $\beta$. This suggests that sufficiently strong nonlocal damping can compensate for arbitrarily large local growth, as long as the nonlocal death rate $\beta$ is sufficiently large to dominate the system's dynamics.
\end{remark}
}

The remainder of this paper is organized as follows. In Section \ref{sec:Prelim}, we collect  preliminary results, including elliptic and parabolic regularity estimates and fundamental functional inequalities that will be utilized throughout our analysis. Section \ref{sec:localEx} is devoted to the local existence of solutions; here, we establish the positivity and regularity of the local solutions and formulate the extensibility criterion that serves as the foundation for our global analysis. Finally, in Section \ref{sec:estimates}, we derive the necessary a priori estimates to bridge the gap between local and global theory. By proving that local solutions bounded in suitable $L^k(\Omega)$ spaces can be extended to global-in-time classical solutions, we provide the rigorous proofs for our main theorems regarding the global existence and boundedness of solutions.

\section{Preliminaries}
\label{sec:Prelim}
This section lays the mathematical foundation for our analysis, collecting key functional inequalities and regularity estimates used in the following sections. We start with general results for elliptic and parabolic equations (Lemmas \ref{lem:uKw} and \ref{lem:parabReg}) and then present two results from functional interpolation (Lemmas \ref{lem:InterA} and \ref{lem:InterB}), highlighting the role of certain assumptions in Theorems \ref{theo:PE} and \ref{theo:PP} and relevant model parameters.
{
\begin{remark}[Notation]
Throughout this work, $c_i$ (with $i\in\N$) denote generic positive constants whose specific values may change from line to line.  The symbols $\varepsilon_i$ refer to small positive values that can be chosen arbitrarily. 
In particular, with regard to the $\varepsilon_i$, sums between such parameters or multiplications of these by constants are not renumbered. 

We shall denote by $k_0 > 1$ a sufficiently large constant such that all the subsequent arguments requiring $k > k_0$ are valid. In particular, since none of our reasoning imposes an upper bound restriction on $k$, the constant $k_0$ may be tacitly enlarged whenever necessary.
\end{remark}
}

\begin{lemma} \label{lem:uKw}
	{Let $0\leq g\in C^1([0,\infty))$ be such that the lower bound in \eqref{eq:fg} is complied. Additionally let $\varphi,\psi \in C^{2,1}(\bar{\Omega}\times(0,T))$, for some $T>0$, being $\psi$ the} solution of the problem $0 = \Delta \psi - \psi + g(\varphi)$, {{equipped with the condition $\frac{\partial \psi}{\partial\nu} = 0$ on $\partial\Omega\times(0,T)$.}} Moreover, let us require that there exists a constant $M$ satisfying $\| \varphi \|_{L^1(\Omega)} \le M$ for all $t\in(0,T)$. 
	Then, for {for all $\rho> 1$, there exists $k$ sufficiently large, such that for all $\Cl[eps]{e:uKw}>0$ and $\Cl[eps]{e:uKwGN}>0$ it holds}
	\begin{equation} \label{eq:LemuKw}
		\intO \varphi^k \psi \leq \Cr{e:uKw} \int_\Omega \varphi^{k+\rho} + \Cr{e:uKwGN} \int_\Omega |\nabla \varphi^{k/2}|^2 + \Cl[const]{c:uKwGN}, \qquad \text{for all}\ t\in(0,T).
	\end{equation}

	\begin{proof}
    First of all, from \cite[Lemma 2.2]{Winkler14} and \cite[Lemma 3.1]{Viglia21}, one can see that for all $p>1$ and $\varepsilon_0>0$, there exists a positive constant $c := c(p,\varepsilon_0)$ such that
    \begin{equation} \label{ineq:FVg}
		\intO \psi^{p+1} \le \varepsilon_0 \intO g(\varphi)^{p+1} + c \left(\intO g(\varphi)\right)^{p+1}, \quad \forall\ t\in (0,T).
	\end{equation}

    Thereafter, using the Young inequality and \eqref{ineq:FVg} with $p=\frac{k}{\rho}$, we deduce
	\begin{equation} \label{eq:uKw}
		\begin{split}
			\int_\Omega \varphi^k \psi \le& \Cl[eps]{e:uKwt} \int_\Omega \varphi^{k+\rho} + \Cl[const]{c:uKwYoung} \int_\Omega \psi^{\frac{k+\rho}{\rho}} \\
			\le& \Cr{e:uKwt} \int_\Omega \varphi^{k+\rho} + \varepsilon_0 \int_\Omega g(\varphi)^{\frac{k+\rho}{\rho}} + \Cl[const]{c:uKwLem} \left(\int_\Omega g(\varphi)\right)^{\frac{k+\rho}{\rho}},
		\end{split}
	\end{equation}
    for every $t\in(0,T)$. 
    
From the bounds for $g$ in \eqref{eq:fg} and the Jensen inequality $(a+b)^s/2^s \le (a^s + b^s)/2$ for $s>1, a,b>0$, as $\rho > 1$, we obtain on $(0,T)$ that
\begin{align*}
	\int_\Omega g(\varphi)^{\frac{k+\rho}{\rho}} &\le K_2^{\frac{k+\rho}{\rho}}  \int_\Omega (\varphi+1)^{k+\rho} \le 2^{k+\rho-1} K_2^{\frac{k+\rho}{\rho}} \int_\Omega   (\varphi^{k+\rho} + 1), \\
	\int_\Omega g(\varphi) &\le K_2 \int_\Omega (\varphi+1)^\rho \le 2^{\rho-1} K_2 \int_\Omega (\varphi^\rho +1).
\end{align*}
Substituting both in \eqref{eq:uKw} it follows
\begin{equation*}
	\begin{split}
		\intO \varphi^k \psi \le& (\Cr{e:uKwt} + \varepsilon_0) \int_\Omega \varphi^{k+\rho} + \Cr{c:uKwLem} \left( \int_\Omega (\varphi^\rho +1) \right)^{\frac{k+\rho}{\rho}} + \Cl[const]{c:uKwLemII} \\
		\le& \Cr{e:uKw} \int_\Omega \varphi^{k+\rho} + \Cl[const]{e:uKwLemA} \left[ \int_\Omega \varphi^\rho \right]^{\frac{k+\rho}{\rho}} + \Cl[const]{c:uKwJen} \\
		\le& \Cr{e:uKw} \int_\Omega \varphi^{k+\rho} + \Cr{e:uKwGN} \int_\Omega |\nabla \varphi^{k/2}|^2 + \Cr{c:uKwGN},
	\end{split}
\end{equation*}
for all $t\in(0,T)$.

We make precise that in the last step, we have estimated the integral $\left[ \int_\Omega \varphi^\rho \right]^{\frac{k+\rho}{\rho}}$ by means of the Gagliardo-Nirenberg inequality in \cite{LiLankeit2016} and the Young inequality, and relying on the crucial assumption that $\int_\Omega \varphi \leq M$ for all $t\in (0,T)$. More precisely, recalling $\rho > 1$, we have made use of
\begin{equation} \label{eq:GNLER}
	\begin{split}
		\lVert \varphi \rVert_{L^\rho(\Omega)}^{k+\rho} = \lVert \varphi^{\frac{k}{2}} \rVert_{L^\frac{2\rho}{k}(\Omega)}^{\frac{2(k+\rho)}{k}} 
	&\le \left(c_{GN} \|\nabla \varphi^{\frac{k}{2}}\|_{L^2(\Omega)}^{\theta} \|\varphi^{\frac{k}{2}}\|_{L^{\frac2k}(\Omega)}^{(1-\theta)}
	+ c_{GN} \|\varphi^\frac{k}{2}\|_{L^{\frac2k}(\Omega)}\right)^{\frac{2(k+\rho)}{k}} \\&\le 2^\frac{k+2\rho}{k} c_{GN}^\frac{2(k+\rho)}{k} \left(\|\nabla \varphi^{\frac{k}{2}}\|_{L^2(\Omega)}^{\frac{2(k+\rho)\theta}{k}} \|\varphi\|_{L^1(\Omega)}^{(k+\rho)(1-\theta)} + \|\varphi\|_{L^1(\Omega)}^{(k+\rho)}\right) \\
	&\le 2^\frac{k+2\rho}{k} c_{GN}^\frac{2(k+\rho)}{k} \left(\|\nabla \varphi^{\frac{k}{2}}\|_{L^2(\Omega)}^{\frac{2(k+\rho)\theta}{k}} M^{(k+\rho)(1-\theta)} + M^{(k+\rho)}\right) \\
    &\le \Cr{e:uKwGN} \int_\Omega |\nabla \varphi^{k/2}|^2 + \Cl[const]{c:uKwGNa},
	\end{split}
\end{equation}
	being $\theta = \frac{k(1-1/\rho)}{k-1+2/n} \in (0,1)$ and $\frac{(k+\rho)\theta}{k} \in (0,1)$ for all $k>k_0$, and $c_{GN}$ a positive constant. 
\end{proof}
\end{lemma}

\begin{lemma}[Parabolic Regularity] \label{lem:parabReg}
	Let $n \in \N$, $\Omega \subset \R^n$ be a bounded and smooth domain and $q \in (1, \infty)$. Moreover, let $\psi_0 \in W^{2,q}(\Omega)$ such that $\frac{\partial \psi_0}{\partial\nu} = 0$ on $\partial\Omega$. Then, there is a constant $C_{MR} > 0$ such that the following holds: Whenever $T \in (0, \infty]$, $I = [0, T)$, $h \in L^q(I; L^q(\Omega))$, every solution $\psi \in W^1_{loc}(I; L^q(\Omega)) \cap L^q_{loc}(I; W^{2,q}(\Omega))$ of
	\begin{align*}
		&\psi_t = \Delta \psi - \psi + h \quad \text{in } \Omega \times (0, T), \\
		&\psi(\cdot, 0) = \psi_0 \quad \text{in } \Omega,
		\quad \frac{\partial \psi_0}{\partial \nu} = 0 \quad \text{on } \partial\Omega \times (0, T),
	\end{align*}
	satisfies
	\[
	\int_0^t e^s \left( \int_\Omega |\Delta \psi(\cdot, s)|^q \, ds \right) \, ds \le C_{MR} \left[ 1 + \int_0^t e^s \left( \int_\Omega |h(\cdot, s)|^q \, ds \right) \, ds \right] \quad \text{for } 0 < t < T.
	\]
	\begin{proof}
		The proof can be found in \cite[Lemma 3.2]{CDFV24}.
	\end{proof}
\end{lemma}

\begin{lemma} \label{lem:InterA}
	Let $\varphi\in C^{2,1}(\bar{\Omega}\times(0,T))$ be a nonnegative function, for some $T>0$. {{Additionally, for $\alpha,\beta>1$ assume}} 
	\begin{equation*}
		\beta > \frac{n(\alpha-1)}{2}. 
	\end{equation*}
	Moreover, let us require that there exists a constant $M$ satisfying $\| \varphi \|_{L^1(\Omega)} \le M$ for all $t\in(0,T)$. Then {{for all parameters $\Cl[eps]{e:lemInterI}>0$ and $\Cl[eps]{e:lemInterII}>0$ it holds, for all $k>k_0$, that}}
	\begin{equation} \label{eq:InterA}
		\begin{split}
			\int_\Omega \varphi^{k+\alpha-1} \le \Cr{e:lemInterI} \int_\Omega |\nabla \varphi^{k/2}|^2 + \Cr{e:lemInterII} \int_\Omega \varphi^{k+\alpha-1} \int_\Omega \varphi^\beta + \Cl[const]{c:lemInter}, \quad \text{for all} \; t\in(0,T).
		\end{split}
	\end{equation}

	\begin{proof}
		 {
		The estimate follows the interpolation strategy detailed in the proof of \cite[Lemma 5.2]{CDFV24} with the Gagliardo--Nirenberg-type inequality in \cite[Lemma 3.1]{CDFV24}. In this case, thanks to the condition $\beta > \frac{n(\alpha-1)}{2}$, we can obtain the inequality in \cite[(33)]{CDFV24} 
        reading instead
		\begin{equation*}
			\intO \varphi^{k+\alpha-1} \le  \frac{\Cr{e:lemInterI}}{2} \int_\Omega |\nabla \varphi^{k/2}|^2 + \frac{\Cr{e:lemInterI}}{2} \int_\Omega \varphi^{k} + \Cr{e:lemInterII} \int_\Omega \varphi^{k+\alpha-1} \int_\Omega \varphi^\beta + \Cl[const]{c:lemInterP}, \quad \forall\ t\in (0,T),
		\end{equation*}
		for certain positive parameters $\Cr{e:lemInterI}$ and $\Cr{e:lemInterII}$.
	}
		
		 {
		Let us consider as instance of the Gagliardo-Nirenberg inequality in \cite[Lemma 2.3]{LiLankeit2016} 
		\begin{equation} \label{eq:GNiik}
			\begin{split}
			\int_\Omega \varphi^{k} = \lVert \varphi^\frac{k}{2} \rVert_{L^2(\Omega)}^2 &\le c_{GN}^2 \left(\lVert \nabla \varphi^{\frac{k}{2}}\rVert_{L^2(\Omega)}^\theta \lVert \varphi^{\frac{k}{2}}\rVert_{L^\frac{2}{k}(\Omega)}^{1-\theta} + \lVert \varphi^{\frac{k}{2}}\rVert_{L^\frac{2}{k}(\Omega)}  \right)^2 \\
			&\le 2 c_{GN}^2 \lVert \nabla \varphi^{\frac{k}{2}}\rVert_{L^2(\Omega)}^\theta \lVert \varphi \rVert_{L^1(\Omega)}^{k(1-\theta)} + 2 c_{GN}^2 \lVert \varphi \rVert_{L^1(\Omega)}^{k} 		
			\end{split}	
		\end{equation}
		for $\theta = \frac{\frac{k}{2} - \frac12}{\frac{k}{2} - \frac12 + \frac1n} \in (0,1)$ and certain positive constant $c_{GN}$. Reasoning as in \eqref{eq:GNLER}, the hypothesis of boundedness for $\| \varphi \|_{L^1(\Omega)}$ and the Young inequality leads to the thesis.
	}
	\end{proof}
\end{lemma}

Analogous to previous Lemma \ref{lem:InterA}, we can state the following.
\begin{lemma} \label{lem:InterB}
	Let $\varphi\in C^{2,1}(\bar{\Omega}\times(0,T))$ be a nonnegative function, for some $T>0$. {{In addition, for $\alpha,\beta>1$ assume}} 
	\begin{equation*}
		\ell \ge \alpha-1 \qquad \text{and} \qquad \beta > \frac{n\ell + 2(\ell + 1-\alpha)}{2}. 
	\end{equation*}
	Moreover, let us require that there exists a constant $M$ satisfying $\| \varphi \|_{L^1(\Omega)} \le M$ for all $t\in(0,T)$. Then {{for all parameters $\Cl[eps]{e:lemInterIB}>0$ and $\Cl[eps]{e:lemInterIIB}>0$ it holds, for all $k>k_0$, that}}
	\begin{equation} \label{eq:InterB}
		\int_\Omega \varphi^{k+\ell} \le \Cr{e:lemInterIB} \int_\Omega |\nabla \varphi^{k/2}|^2 + \Cr{e:lemInterIIB} \int_\Omega \varphi^{k+\alpha-1} \int_\Omega \varphi^\beta + \Cl[const]{c:lemInterB}, \quad \text{for all} \; t\in(0,T).
	\end{equation}

	\begin{proof}
		 {
			Following the same analytic procedure as in Lemma \ref{lem:InterA}, 
			the derivation can be viewed as a specialization of the functional framework provided in \cite[Lemma 6.2]{DiFraVig} for the inequality in \cite[(44)]{DiFraVig}  by setting the diffusion parameters therein to $m_1 = m_2 = 1$. Utilizing the specific {{Gagliardo-Nirenberg variant provided in}} \cite[Lemma 4.2]{DiFraVig} and the Young inequality, the condition $\beta > \frac{n\ell + 2(\ell + 1 - \alpha)}{2}$ allows to obtain \eqref{eq:InterB} as done for the previous Lemma \ref{lem:InterA}. 
		}
	\end{proof}
\end{lemma}

\section{Local existence and boundedness criterions}
\label{sec:localEx}
In this section, we study the system’s short-time behavior, proving the existence of a unique nonnegative local classical solution and its mass-conservation property. We also present a based-Moser-type criterion, linking local well-posedness to global boundedness.
	\begin{lemma}[Local existence and mass boundedness]\label{lem:local_existence}
		For $\tau\in\{0,1\}$, let all the assumptions in \eqref{eq:modelCond} be fulfilled. 
		Then problem (1) has a unique and nonnegative classical solution
		\begin{gather*}
			%
			u \in C^{2+\sigma,1+\frac\sigma2}(\bar{\Omega}\times[0,\tmax)), \;
			v,w \in C^{2+\sigma,\tau+\frac\sigma2}(\bar{\Omega}\times[0,\tmax)),
		\end{gather*}
		for some maximal $T_{\max} \in (0, \infty]$ which is such that
		\begin{equation} \label{eq:Tmax}
			\text{either } T_{\max} = \infty \quad \text{or } \limsup_{t \to T_{\max}} \|u(\cdot,t)\|_{L^\infty(\Omega)} = \infty. 
		\end{equation}
		Additionally, there exists $M_0 > 0$ such that
		\begin{equation} \label{eq:Massbound}
			\int_\Omega u(x,t) \, dx \le M_0 \quad \text{for all } t \in (0, T_{\max}). 
		\end{equation}
		
		\begin{proof}
		The proof follows the same arguments as in 
		\cite[Proposition 4]{Bian18}
		and \cite[Lemma 7.1]{BaghaeiFraYuyaGiupe2025}.
		\end{proof}
	\end{lemma}

    {Once the local existence of the examined model is guaranteed by the preceding lemma, it is necessary to implement a criterion that ensures the uniform boundedness of these solutions. In this sense, the following result establishes a connection between a precise a priori estimate in a Lebesgue space and the desired boundedness. In this sense, from here on $(u, v, w)$ will indicate the local solution of model \eqref{eq:model} guaranteed in Lemma \ref{lem:local_existence}.}
	
	\begin{lemma}[Boundedness criterion] \label{lem:extcrit}
		If for some $k > k_0$ it holds that
		\[
		u\in L^\infty\left((0,\tmax); L^k(\Omega)\right),
		\]
		actually $u$ is uniformly bounded on $(0, T_{\max})$, and consequently $u \in L^\infty\left((0, \infty); L^\infty(\Omega)\right)$. As a consequence, $v$ and $w$ are also uniformly bounded.
		\begin{proof}
			%
			 {
				The proof follows similar arguments as \cite[Lemma 4.2]{CDFV24} and is based on the regularity-extension framework established in \cite[Lemma A.1]{TaoWinkler2012}. Specifically, the uniform $L^k(\Omega)$ estimates for $u$, for sufficiently large $k$, allow for the application of elliptic or parabolic regularity results to the signaling equations for $v$ and $w$. For $k > n$, standard Sobolev embeddings then imply the uniform boundedness of the signal gradients $\nabla v$ and $\nabla w$ in $L^\infty(\Omega)$. This level of regularity is sufficient to bootstrap the $L^\infty$-norm of $u$ via a Moser-Alikakos-type iteration, thereby preventing the blow-up {{scenario, and ensuring the uniform-in-time boundedness of $u$ on $(0,T_{\max})$. Subsequently, the alternative criterion \eqref{eq:Tmax} provides $T_{\max}=\infty$.}}}
		\end{proof}
	\end{lemma}

\section{A priori estimates. Proof of the Theorems}	
\label{sec:estimates}	
	The core of our global analysis lies in the derivation of uniform-in-time $L^k(\Omega)$ estimates for the cell density. The estimates derived in this section will directly facilitate the application of the extensibility criterion established in Lemma \ref{lem:extcrit}, thereby concluding the proofs of Theorems \ref{theo:PE} and \ref{theo:PP}. 
		
	Our analysis will rely on the behavior of the following  energy functional $\Phi_\tau(t) := e^{\tau t} \int_\Omega u^k$. We divide the study in two cases, whether $\tau = 0$ or $\tau = 1$. 
	 
	 \subsection{Elliptic case ($\tau = 0$)}
     \label{subsec:PE}
	 First, we focus on the parabolic-elliptic model, distinguishing each  parameter regime in an associated lemma.
     
\begin{lemma} \label{lem:ElcasA}
	 	Assume that $\alpha,\beta>1$ and $\ell>0$ satisfying 
	 	\begin{equation} \label{eq:condA}
	 		\ell \le \min\{\alpha - 1,\rho\} \qquad \text{and} \qquad \beta > \frac{n(\alpha-1)}{2}.
	 	\end{equation}
	 	Then, for all $k>k_0$ it holds
	 	\begin{math}
	 		u\in L^\infty\left((0,\tmax); L^k(\Omega)\right).
	 	\end{math}
	 				
	 \begin{proof}	
	 	Let us divide the first condition in \eqref{eq:condA} in two cases for the proof.
	 	
	 	\begin{description}
	 		\item[Case $\ell \le \alpha - 1 = \min\{\alpha - 1,\rho\}$:\ ] \par 
	 Computing the derivative of $\Phi_0(t)$ and using repeated times integration by parts with the homogeneous Neumann boundary conditions on $u,v$, and $w$ we obtain from the first equation in \eqref{eq:model} for all $t\in (0,\tmax)$ that
	\begin{equation} \label{eq:derPhi}
		\begin{split}
			\frac{d \Phi_0(t)}{dt}  &= k \intO u^{k-1} u_t \\
            &= k \intO u^{k-1} \left(\Delta u - \chi \nabla (u\nabla v) + \xi \nabla (u\nabla w) + a u^\alpha - b u^\alpha \intO u^\beta\right)\\ 
		&= -\frac{4(k-1)}{k} \int_\Omega |\nabla u^{k/2}|^2 + \chi k(k-1) \int_\Omega u^{k-1} \nabla u \cdot \nabla v\\
		&\quad - \xi k(k-1) \int_\Omega u^{k-1} \nabla u \cdot \nabla w + a k \int_\Omega u^{k+\alpha-1} - b k \int_\Omega u^{k+\alpha-1} \intO u^\beta \\
		&= -\frac{4(k-1)}{k} \int_\Omega |\nabla u^{k/2}|^2 + \chi (k-1) \int_\Omega \nabla u^k \cdot \nabla v - \xi (k-1) \int_\Omega \nabla u^k \cdot \nabla w \\
		&\quad + a k \int_\Omega u^{k+\alpha-1} - b k \int_\Omega u^{k+\alpha-1} \intO u^\beta \\
		&= -\frac{4(k-1)}{k} \int_\Omega |\nabla u^{k/2}|^2 - \chi (k-1) \int_\Omega u^k \Delta v + \xi (k-1) \int_\Omega u^k \Delta w\\
		&\quad + a k \int_\Omega u^{k+\alpha-1} - b k \int_\Omega u^{k+\alpha-1} \intO u^\beta.
		\end{split}
	\end{equation}
	
	From the elliptic equation for $v$ and $w$ in \eqref{eq:model} we obtain on $(0,\tmax)$
	\begin{equation} \label{eq:ineqderPhi}	
		\begin{split}
		\frac{d \Phi_0(t)}{dt}  &= -\frac{4(k-1)}{k} \int_\Omega |\nabla u^{k/2}|^2 - \chi (k-1) \int_\Omega u^k v + \chi(k-1) \int_\Omega u^k f(u) \\
		&\quad + \xi (k-1) \int_\Omega u^k w - \xi (k-1) \int_\Omega u^k g(u) + a k \int_\Omega u^{k+\alpha-1} \\
        &\quad - b k \int_\Omega u^{k+\alpha-1} \intO u^\beta  \\
		&\le -\frac{4(k-1)}{k} \int_\Omega |\nabla u^{k/2}|^2 + \chi (k-1) K_1 \int_\Omega u^{k+\ell} + \xi(k-1) \int_\Omega u^k w \\
		&\quad - \xi (k-1) K_2 \int_\Omega u^{k+\rho} + a k \int_\Omega u^{k+\alpha-1} - b k \int_\Omega u^{k+\alpha-1} \intO u^\beta,
		\end{split}
	\end{equation}
	after neglecting a nonpositive term and using the bounds in \eqref{eq:fg}.
	
	Let us choose in \eqref{eq:LemuKw} the parameters $\varepsilon_1= K_2$ and $\varepsilon_2 = \frac{2}{k \xi}$. 	
	Substituting in \eqref{eq:ineqderPhi} the resulting inequality we obtain
	\begin{equation} \label{eq:befSimp}
		\begin{split}
			\frac{d \Phi_0(t)}{dt}  \le & -\tfrac{2(k-1)}{k} \int_\Omega |\nabla u^{k/2}|^2 + \chi(k-1) K_1 \int_\Omega u^{k+\ell} + a k \int_\Omega u^{k+\alpha-1} \\
			& - b k \int_\Omega u^{k+\alpha-1} \intO u^\beta + \Cl[const]{c:befSimp}
		\end{split}
	\end{equation}
    for every $t\in(0,\tmax)$.

	Due to the hypothesis $\ell \le \alpha -1$, we can use the Young inequality to obtain
	\begin{equation*}
		\int_\Omega u^{k+\ell} \le  \int_\Omega u^{k+\alpha-1} + \Cl[const]{c:YoungEA}, \quad \forall \ t \in (0,\tmax).
	\end{equation*}
	That inequality merged into \eqref{eq:befSimp} leads to
	\begin{equation}  \label{eq:befInter}
			\frac{d \Phi_0(t)}{dt}  \le -\tfrac{2(k-1)}{k} \int_\Omega |\nabla u^{k/2}|^2 + \Cl[const]{c:casAY} \int_\Omega u^{k+\alpha-1} - b k \int_\Omega u^{k+\alpha-1} \intO u^\beta + \Cl[const]{c:SimpI}
	\end{equation}
    for all $t \in (0,\tmax)$.
	
	Thanks to the hypothesis $\beta > \frac{n(\alpha-1)}{2}$ in \eqref{eq:condA} we can use {  Lemma \ref{lem:InterA}}. With the choice of the parameters $\Cr{e:lemInterI} = \frac{k-1}{ \Cr{c:casAY} k}$ and $\Cr{e:lemInterII}= \frac{bk}{\Cr{c:casAY}}$ in \eqref{eq:InterA}, we can obtain
	\begin{equation} \label{eq:ODIbefGN}
		\frac{d \Phi_0(t)}{dt}  \le - \tfrac{k-1}{k} \int_\Omega |\nabla u^{k/2}|^2 + \Cl[const]{c:ODIA}, \quad \forall \ t \in (0,\tmax).
	\end{equation}
	
	A similar reasoning as in \eqref{eq:GNiik} with the Gagliardo-Nirenberg and the Young inequalities, this time with the bound in \eqref{eq:Massbound}, assures that there exist $\Cl[const]{c:GNuk} >0$ such that it holds on $(0,\tmax)$
	\begin{equation} \label{eq:GNuk}
		\begin{split}
			\intO u^k &\le 2 c_{GN}^2 M_0^{k(1-\theta)} \left(\intO |\nabla u^{k/2}|^2\right)^\theta + 2 c_{GN}^2 M_0^{k} \\
					  &\le \intO |\nabla u^{k/2}|^2 + \Cr{c:GNuk}.
		\end{split}
	\end{equation}
	Inserting that relation in \eqref{eq:ODIbefGN} we obtain the ordinary differential inequality
	\begin{equation*}
		\frac{d \Phi_0(t)}{dt}  \le - \tfrac{k-1}{k} \Phi_0(t) + \Cl[const]{c:ODIA2}, \quad \forall \ t \in (0,\tmax),
	\end{equation*}
	with initial condition $\Phi(0) = \intO u_0^k$. Therefore, there exists a constant $L := \max\left\{\frac{\Cr{c:ODIA2} k}{k-1}, \intO u_0^k\right\}$ such that $\Phi_0(t) = \int_\Omega u^k \le L$ for all $t\in(0,\tmax)$.
	 	
	\item[Case $\ell \le \rho = \min\{\alpha - 1, \rho\}$ \ ] \par  
		If we consider at first $\ell < \rho$, the proof follows the same reasoning as for the first case ($\ell \le \alpha-1$), this time merging into \eqref{eq:ineqderPhi} the Young inequality
		\begin{equation*}
			\int_\Omega u^{k+\ell} \le \frac{\xi K_2}{2\chi K_1} \int_\Omega u^{k+\rho} + \Cl[const]{c:YoungEC}, \quad \forall \ t \in (0,\tmax).
		\end{equation*}
		Therefore, as a consequence of the choice $\Cr{e:uKw} = \frac{K_2}{2}$ and $\Cr{e:uKwGN} = \frac{2}{k \xi}$ in \eqref{eq:LemuKw}, we have on $(0,\tmax)$
		\begin{equation*}
			\frac{d \Phi_0(t)}{dt}  \le -\tfrac{2(k-1)}{k} \int_\Omega |\nabla u^{k/2}|^2 + ak \int_\Omega u^{k+\alpha-1} - b k \int_\Omega u^{k+\alpha-1} \intO u^\beta + \Cl[const]{c:SimpIC}.
		\end{equation*}
		
		From here on, the same procedure used after \eqref{eq:befInter} leads to the claimed result.
		
		On the other hand, if we have $\ell = \rho$, we observe that the condition $\ell \le \alpha-1$ remains valid. Thus, the argument proceeds exactly as in the previous case, beginning from equation \eqref{eq:befSimp}.
	 \end{description}
	
	\end{proof}
\end{lemma}

	A similar result to the previous lemma can be obtained imposing different restrictions on the problem parameters. The following lemma illustrates such different cases.

\begin{lemma} \label{lem:ElcasB}
Assume that $\alpha,\beta,\rho>1$ and $\ell>0$ {\textcolor{blue}{satisfy}} 
\begin{equation} \label{eq:condB}
	\alpha - 1 < \min\{\ell, \rho\} \qquad \text{and} \qquad \beta > \frac{n\ell + 2(\ell-\alpha+1)}{2}.
\end{equation}
Then, for all $k>k_0$ it holds
\begin{math}
	u\in L^\infty\left((0,\tmax); L^k(\Omega)\right).
\end{math}

\begin{proof} Let us consider two cases from the first condition in \eqref{eq:condB} as for the previous lemma, the proof follows the same reasoning as in the previous one. Let us point the differences due to the imposed requirements from \eqref{eq:befSimp} on.
	
\begin{description}
	\item[Case $\alpha - 1 < \ell$: \ ] \par 
	
In this case, from the Young inequality we obtain
\begin{equation*}
	\int_\Omega u^{k+\alpha-1} \le \int_\Omega u^{k+\ell} + \Cl[const]{c:YoungEB}, \quad \forall \ t \in (0,\tmax).
\end{equation*}

Merging that relation into \eqref{eq:befSimp} results
\begin{equation} \label{eq:befInterB}
		\frac{d \Phi_0(t)}{dt}  \le -\tfrac{2(k-1)}{k} \int_\Omega |\nabla u^{k/2}|^2 + \Cl[const]{c:casBY} \int_\Omega u^{k+\ell} - b k \int_\Omega u^{k+\alpha-1} \intO u^\beta + \Cl[const]{c:SimpIB},
\end{equation}
for all $t \in (0,\tmax)$.

This time, the requirement $\beta > \frac{n\ell + 2(\ell-\alpha+1)}{2}$ in \eqref{eq:condB} allows the use of {  Lemma \ref{lem:InterB}}. By choosing the parameters $\Cr{e:lemInterIB}= \frac{k-1}{\Cr{c:casBY}k}$ and $\Cr{e:lemInterIIB}=\frac{bk}{\Cr{c:casBY}}$ in \eqref{eq:InterB}, we have
\begin{equation*} 
	\frac{d \Phi_0(t)}{dt}  \le - \tfrac{2(k-1)}{k} \int_\Omega |\nabla u^{k/2}|^2 + \Cl[const]{c:ODIB}, \quad \ \forall t \in (0,\tmax).
\end{equation*}
Thus, an analogous reasoning as from \eqref{eq:ODIbefGN} on proves the claimed result.

\item[Case $\alpha-1 < \rho$: \ ] \par 
Due to the hypotheses and the Young inequality we obtain 
\begin{equation*}
	\int_\Omega u^{k+\alpha-1} \le \frac{\xi (k-1) K_2}{2ak} \int_\Omega u^{k+\rho} + \Cl[const]{c:YoungED}, \quad \forall \ t\in (0,\tmax).
\end{equation*}

After choosing $\Cr{e:uKw} = \frac{K_2}{2}$ and $\Cr{e:uKwGN} = \frac{2}{k \xi}$  in \eqref{eq:LemuKw}, continuing from \eqref{eq:ineqderPhi} we obtain this time, for every $t \in (0,\tmax)$, that
\begin{equation*}
	\frac{d \Phi_0(t)}{dt}  \le -\frac{2(k-1)}{k} \int_\Omega |\nabla u^{k/2}|^2 + \chi(k-1)K_1 \int_\Omega u^{k+\ell} - b k \int_\Omega u^{k+\alpha-1} \intO u^\beta + \Cl[const]{c:SimpID}.
\end{equation*}

From here on, the same procedure used after \eqref{eq:befInterB} proves the claimed result.
\end{description}

\end{proof}
\end{lemma}

\begin{remark}
	In the last lemma, it could be considered the requirement $\alpha-1 \le \ell$ in \eqref{eq:condB}, obtaining still \eqref{eq:befInterB}. However, in the limit case $\ell = \alpha -1$, the condition in \eqref{eq:condB} regarding the parameter $\beta$ would be the same as in {  Lemmas \ref{lem:InterA} and \ref{lem:ElcasA}}.
\end{remark}

\subsection{Parabolic case ($\tau=1$)}
We now turn our attention to the fully parabolic regime $\tau=1$ in \eqref{eq:model}. In this case,  the energy functional turns into $\Phi_1(t) := e^t \intO u^k$, and the derivative of this functional shares terms with \eqref{eq:derPhi}, as 
\begin{equation} \label{eq:derPsi}
	\frac{d \Phi_1(t)}{dt} = \Phi_1(t) + e^{t} \frac{d \Phi_0(t)}{dt} .
\end{equation}
\begin{lemma}
	For the functional $\Phi_1(t)$, provided $u$ is a solution of \eqref{eq:model} and $\ell>0,\rho>1$, it holds
	\begin{equation} \label{eq:befCasesParb}
		\begin{split}
			\Phi_1(t) \le& \int_0^t e^s \left(-\tfrac{4(k-1)}{k} \int_\Omega |\nabla u^{k/2}|^2 + \Cl[const]{c:Parkl} \int_\Omega u^{k+\ell} + \Cl[const]{c:Parkr} \int_\Omega u^{k+\rho} + \int_\Omega u^k \right. \\
			&\left. + a k \int_\Omega u^{k+\alpha-1} - b k \int_\Omega u^{k+\alpha-1} \int_\Omega u^\beta\right) + \Cl[const]{c:Parexp} e^t + \Cl[const]{c:Parconst}
		\end{split}
	\end{equation}
	for all $t\in(0,\tmax)$.
	
	\begin{proof}
		
Combining the inequalities of Cauchy-Schwarz and Young, provided $\ell>0,\rho>1$, there exist some positive constants $\Cl[const]{c:kvl}, \Cl[const]{c:kvlG},\Cl[const]{c:kwr}$, and $\Cl[const]{c:kwrG}$ such that
\begin{equation} \label{ineq:ukDv}
	- \chi (k-1) \intO u^k \Delta v \le \Cr{c:kvl} \intO u^{k+\ell} + \Cr{c:kvlG} \intO \lvert \Delta v \rvert^{\frac{k+\ell}{\ell}}
\end{equation}
and
\begin{equation} \label{ineq:ukDw}
	\xi (k-1) \intO u^k \Delta w \le \Cr{c:kwr} \intO u^{k+\rho} + \Cr{c:kwrG} \intO \lvert \Delta w \rvert^{\frac{k+\rho}{\rho}},
\end{equation}
for all $t\in(0,\tmax)$. Hence, merging \eqref{ineq:ukDv} and \eqref{ineq:ukDw} into \eqref{eq:derPhi} we obtain
\begin{equation} \label{eq:befParReg}
	\begin{split}
		\frac{d \Phi_0(t)}{dt} \le& -\frac{4(k-1)}{k} \int_\Omega |\nabla u^{k/2}|^2 + \Cr{c:kvl} \int_\Omega u^{k+\ell} + \Cr{c:kvlG} \int_\Omega |\Delta v|^\frac{k+\ell}{\ell} + \Cr{c:kwr} \int_\Omega u^{k+\rho} \\
		& + \Cr{c:kwrG} \int_\Omega |\Delta w|^{\frac{k+\rho}{\rho}} + a k \int_\Omega u^{k+\alpha-1} - b k \int_\Omega u^{k+\alpha-1} \int_\Omega u^\beta .
	\end{split}
\end{equation}

Returning to $\Phi_1(t)$, from \eqref{eq:derPsi} and \eqref{eq:befParReg} we have
\begin{equation*}
	\begin{split}
		\frac{d \Phi_1(t)}{dt} \le& \Phi_1(t) + e^t \left(-\tfrac{4(k-1)}{k} \int_\Omega |\nabla u^{k/2}|^2 + \Cr{c:kvl} \int_\Omega u^{k+\ell} + \Cr{c:kvlG} \int_\Omega |\Delta v|^\frac{k+\ell}{\ell}  \right. \\
		& \left.+ \Cr{c:kwr} \int_\Omega u^{k+\rho} + \Cr{c:kwrG} \int_\Omega |\Delta w|^{\frac{k+\rho}{\rho}} + a k \int_\Omega u^{k+\alpha-1} - b k \int_\Omega u^{k+\alpha-1} \int_\Omega u^\beta  \right).
	\end{split}
\end{equation*}

Integrating for $ s \in [0 ,t]$ we obtain on $(0,\tmax)$ that
\begin{equation*}
	\begin{split}
		\Phi_1(t) - \Phi_1(0) &\le \int_0^t e^s \int_\Omega u^k -\tfrac{4(k-1)}{k} \int_0^t e^s \int_\Omega |\nabla u^{k/2}|^2 + \Cr{c:kvl} \int_0^t e^s \int_\Omega u^{k+\ell} \\
		& + \Cr{c:kvlG} \int_0^t e^s \int_\Omega |\Delta v|^\frac{k+\ell}{\ell} + \Cr{c:kwr} \int_0^t e^s \int_\Omega u^{k+\rho} + \Cr{c:kwrG}\int_0^t e^s \int_\Omega |\Delta w|^{\frac{k+\rho}{\rho}} \\
		& + a k \int_0^t e^s \int_\Omega u^{k+\alpha-1} - b k \int_0^t e^s \left(\int_\Omega u^{k+\alpha-1} \int_\Omega u^\beta\right).
	\end{split}
\end{equation*}
Using Lemma \ref{lem:parabReg} for $\psi = v$ or $w$, respectively with $q = \frac{k+\ell}{\ell}$ or $\frac{k+\rho}{\rho}$, and the conditions in \eqref{eq:fg}, we get
\begin{equation*}
	\begin{split}
		\Phi_1(t) \le& \int_\Omega u_0^k + \int_0^t e^s \int_\Omega u^k -\tfrac{4(k-1)}{k} \int_0^t e^s \int_\Omega |\nabla u^{k/2}|^2 + \Cr{c:kvl} \int_0^t e^s \int_\Omega u^{k+\ell} \\
		&  + \Cr{c:kvlG} \int_0^t e^s \int_\Omega f(u)^\frac{k+\ell}{\ell} + \Cr{c:kwr} \int_0^t e^s \int_\Omega u^{k+\rho} + \Cr{c:kwrG}\int_0^t e^s \int_\Omega g(u)^{\frac{k+\rho}{\rho}}\\
		& + a k \int_0^t e^s \int_\Omega u^{k+\alpha-1} - b k \int_0^t e^s \left(\int_\Omega u^{k+\alpha-1} \int_\Omega u^\beta\right) + \Cl[const]{c:kvwlr} \\
		\le& \int_\Omega u_0^k + \int_0^t e^s \int_\Omega u^k -\tfrac{4(k-1)}{k} \int_0^t e^s \int_\Omega |\nabla u^{k/2}|^2 + \Cr{c:kvl} \int_0^t e^s \int_\Omega u^{k+\ell} \\
		& + \Cl[const]{c:kvlGa} \int_0^t e^s \int_\Omega u^{k+\ell} + \Cr{c:kwr} \int_0^t e^s \int_\Omega u^{k+\rho} + \Cl[const]{c:kwrGa} \int_0^t e^s \int_\Omega u^{k+\rho} + \Cl[const]{c:kwrGb} e^t\\
		& + a k \int_0^t e^s \int_\Omega u^{k+\alpha-1}  - b k \int_0^t e^s \left(\int_\Omega u^{k+\alpha-1} \int_\Omega u^\beta\right) + \Cl[const]{c:parreg} \\
		\le& \int_0^t e^s \left( \int_\Omega u^k -\tfrac{4(k-1)}{k} \int_\Omega |\nabla u^{k/2}|^2 + \Cr{c:Parkl} \int_\Omega u^{k+\ell} + \Cr{c:Parkr} \int_\Omega u^{k+\rho} + a k \int_\Omega u^{k+\alpha-1} \right. \\
		&\left. - b k \int_\Omega u^{k+\alpha-1} \int_\Omega u^\beta\right) + \Cr{c:Parexp} e^t + \Cr{c:Parconst},
	\end{split}
\end{equation*}
for every $t\in(0,\tmax)$. Therefore, the lemma holds.
\end{proof}
\end{lemma}

{Having reduced the energy functional to a form compatible with \eqref{eq:befSimp}, we may now proceed by studying different cases as in Section \ref{subsec:PE}. }

\begin{lemma} \label{lem:ParcasA}
	Assume that $\alpha,\beta,\rho>1$ and $\ell>0$ {{satisfy}} 
	\begin{equation} \label{eq:condAParb}
		\max\ \{\rho , \ell\} \le \alpha - 1 \qquad \text{and} \qquad \beta > \frac{n(\alpha-1)}{2}.
	\end{equation}
	Then, for all $k>k_0$ it holds
	\begin{math}
		u\in L^\infty\left((0,\tmax); L^k(\Omega)\right).
	\end{math}
	
	\begin{proof}
		From condition $\max \{\rho,\ell\} \le \alpha-1$ in \eqref{eq:condAParb} and the Young inequality, the inequality \eqref{eq:befCasesParb} leads to
		\begin{equation*}
			\begin{split}
				\Phi_1(t) \le& \int_0^t e^s \left( \int_\Omega u^k -\tfrac{4(k-1)}{k} \int_\Omega |\nabla u^{k/2}|^2 + \Cl[const]{c:PYak} \int_\Omega u^{k+\alpha-1} - b k \int_\Omega u^{k+\alpha-1} \int_\Omega u^\beta\right) \\
				& + \Cl[const]{c:Parexpi} e^t + \Cr{c:Parconst},
			\end{split}
		\end{equation*} 
        for all $t\in(0,\tmax)$.
        
		The condition $\beta > \frac{n(\alpha-1)}{2}$ in \eqref{eq:condAParb} allows to apply {  Lemma \ref{lem:InterA}}. With the choice $\Cr{e:lemInterI}=\frac{2(k-1)}{\Cr{c:PYak}k}$ and $\Cr{e:lemInterIB}=\frac{bk}{\Cr{c:PYak}}$ in \eqref{eq:InterA}, it follows on $(0,\tmax)$ that
		\begin{equation} \label{eq:ParbafterLemIA}
			\begin{split}
				\Phi_1(t) \le& \int_0^t e^s \left(\intO u^k - \tfrac{2(k-1)}{k} \int_\Omega |\nabla u^{k/2}|^2 \right) + \Cl[const]{c:Parexpii} e^t + \Cr{c:Parconst}.
			\end{split}
		\end{equation}
		Reasoning as in \eqref{eq:GNuk}, we know that there exists a positive constant $\Cl[const]{c:GNpar}$ such that
		\begin{displaymath}
				\intO u^k \le \tfrac{2(k-1)}{k} \intO |\nabla u^{k/2}|^2 + \Cr{c:GNpar}.
		\end{displaymath}
		Consequently, we conclude that 
		\begin{math}
			\Phi_1(t) \le \Cl[const]{c:Parexpiii} e^t + \Cl[const]{c:Parconsti}
		\end{math}
		 and this implies that 
         \begin{equation*}
            \intO u^k \le \Cr{c:Parexpiii}  + \Cr{c:Parconsti} e^{-t} \le 2 \max \{\Cr{c:Parexpiii}  , \Cr{c:Parconsti}\} \quad \forall \; t\in(0,\tmax),       
         \end{equation*}
	   so concluding. 
	\end{proof}
\end{lemma}
		
\begin{lemma} \label{lem:ParcasB}
	Assume that $\alpha,\beta,\rho>1$ and $\ell>0$ {{satisfy}}
	\begin{equation} \label{eq:condBParb}
		\alpha - 1 \le \min\ \{\rho , \ell\} \qquad \text{and} \qquad \beta > \frac{n \max\left\{\rho,\ell \right\} + 2(\max\left\{\rho,\ell \right\} - (\alpha-1))}{2}.
	\end{equation}
	Then, for all $k>k_0$ it holds
	\begin{math}
		u\in L^\infty\left((0,\tmax); L^k(\Omega)\right).
	\end{math}
	
	\begin{proof}
		From condition $\alpha-1 \le \min \{\rho,\ell\}$ in \eqref{eq:condBParb} and the Young inequality we have on $(0,\tmax)$ that
		\begin{equation*}
			\intO u^{k+\alpha-1} \le \intO u^{k+\min \left\{\rho,\ell\right\}} + \Cl[const]{c:YoungcaseIIm} \le \intO u^{k+\max \left\{\rho,\ell\right\}} + \Cl[const]{c:YoungcaseIIM}.
		\end{equation*}
		Plugging these results into \eqref{eq:befCasesParb} we obtain due to the Young inequality that
		\begin{equation*}
			\begin{split}
				\Phi_1(t) \le& \int_0^t e^s \left( \int_\Omega u^k -\tfrac{4(k-1)}{k} \int_\Omega |\nabla u^{k/2}|^2 + \Cl[const]{c:PYakB} \int_\Omega u^{k+\max\{\rho,\ell\}} - b k \int_\Omega u^{k+\alpha-1} \int_\Omega u^\beta\right) \\
				& + \Cl[const]{c:Parexpia} e^t + \Cr{c:Parconst},
			\end{split}
		\end{equation*} 
        for all $t\in(0,\tmax)$.
        
		Thanks to the condition
        \begin{math}
            \beta > \frac{n \max\left\{\rho,\ell \right\} + 2(\max\left\{\rho,\ell \right\} - (\alpha-1))}{2}
        \end{math}
         in \eqref{eq:condBParb}, we apply {  Lemma \ref{lem:InterB}} analogously to the previous cases, with $\rho$ replacing $\ell$ in \eqref{eq:InterB} where appropriate. The conclusion of the proof then follows the steps described from \eqref{eq:ParbafterLemIA} onwards.		
	\end{proof}
\end{lemma}		

\subsection{Proof of the main results}

Now we can present the proofs of Theorems \ref{theo:PE} and \ref{theo:PP}.

\begin{proof}[Proof of  Theorem \ref{theo:PE}]
	 {By virtue of the conditions on the exponents $\alpha, \beta, \ell,$ and $\rho$ in cases $a)$ and $b)$ of Theorem \ref{theo:PE} (see \eqref{eq:TheoPEcond}), the main bounds achieved in Lemmas \ref{lem:ElcasA}, \ref{lem:ElcasB}, and Lemma \ref{lem:extcrit} are sufficient to conclude.}
\end{proof}

\begin{proof}[Proof of Theorem \ref{theo:PP}]
	 {The global {{boundedness}} for the fully parabolic regime follows the same logical framework to that of Theorem \ref{theo:PE}. Under the assumptions $a)$ or $b)$ of Theorem \ref{theo:PP} (see \eqref{eq:TheoPPcond}), in this scenario we conclude by invoking the uniform-in-time $L^k(\Omega)$-estimates for the cell density $u$ obtained in Lemmas \ref{lem:ParcasA} and \ref{lem:ParcasB}.}
\end{proof}

\section{Conclusions}

In this work, we have investigated the global dynamics of a three-component attraction-repulsion chemotaxis system featuring a nonlocal logistic-type source term. By performing an analysis of the interplay between nonlinear signaling production and population kinetics, we established sufficient conditions on the exponents $\ell, \rho, \alpha,$ and $\beta$ to ensure the global existence and uniform-in-time boundedness of classical solutions. 

Our results, covering both the parabolic-elliptic ($\tau=0$) and fully parabolic ($\tau=1$) regimes, demonstrate that the aggregative tendencies inherent in chemotactic models can be effectively suppressed by the stabilizing influence of chemorepulsion and nonlocal damping. Specifically, we have shown that a sufficiently strong nonlocal competitive pressure can prevent the formation of finite-time singularities, even in the presence of aggressive signal production. By extending the framework of previous studies to a broader class of nonlinear interactions, this paper provides a more comprehensive characterization of the mechanisms that maintain population stability in complex chemical environments. Future research may further explore the asymptotic behavior of these solutions and the specific thresholds for pattern formation within the identified stability regimes.
\medskip 

\section*{Acknowledgments}
{The authors RDF and GV are members of the Gruppo Nazionale per l'Analisi Matematica, la Probabilit\`a e le loro Applicazioni (GNAMPA) of the Istituto Nazionale di Alta Matematica (INdAM), and participate in the INdAM - GNAMPA Project {\em Modelli di reazione-diffusione-trasporto: dall'analisi alle applicazioni} (CUP E53C25002010001). Both are also partially supported by the research project {\em Partial Differential Equations and their role in understanding natural phenomena} (2023, CUP F23C25000080007), funded by  \href{https://www.fondazionedisardegna.it/}{Fondazione di Sardegna}. 
RDF acknowledges financial support by PNRR e.INS Ecosystem of Innovation
for Next Generation Sardinia (CUP F53C22000430001, codice MUR ECS0000038).
The author MVRN is a member of the research group FQM-315 of Junta de Andalucía and has been partially supported by grants PR2024-011 and PR2024-039  of the ‘Plan Propio–UCA 2025-2026’ funded by Universidad de Cádiz, Spain. 
}

\bibliographystyle{abbrv}
\bibliography{Chemotaxis}
\end{document}